\documentclass[11pt, a4paper]{amsart}
\usepackage{amssymb, array, amsmath, amscd, pdfpages, enumerate, amsthm, fixltx2e, setspace, hyperref}

\DeclareMathOperator*{\dist}{dist}

\DeclareMathOperator*{\sgn}{sgn}

\newcommand{\e}{\mathrm{e}}
\newcommand{\ii}{\mathrm{i}}
\newcommand{\dd}{\mathrm{d}}

\newcommand{\mlog}{m_\mathrm{log}}

\newcommand{\Lloc}{L^1_\mathrm{loc}}

\newcommand{\B}{\mathcal{B}}

\newcommand{\T}{\mathbb{T}}
\newcommand{\RR}{\mathbb{R}}
\newcommand{\CC}{\mathbb{C}}

\newcommand{\ZZ}{\mathbb{Z}}

\newcommand{\EE}{\mathbb{E}}

\newtheorem{thm}{Theorem}[section]

\theoremstyle{definition}
\newtheorem{rem}[thm]{Remark}
\newtheorem{ex}[thm]{Example}

\numberwithin{equation}{section}

\begin{document}

\title[A quantified Tauberian theorem for sequences]{ A quantified Tauberian theorem for~sequences}

\author{David Seifert}
\address{St John's College, St Giles, Oxford\;\;OX1 3JP, United Kingdom}
\email{david.seifert@sjc.ox.ac.uk}
\thanks{The author is grateful to Professor Yuri Tomilov for several helpful comments on an earlier version of this manuscript.}

\begin{abstract}
The main result of this paper is a quantified version of Ingham's Tauberian theorem for bounded vector-valued  sequences rather than functions. It gives an estimate on the rate of decay of such a sequence in terms of the behaviour of a certain boundary function, with the quality of the estimate depending on the degree of smoothness this boundary function is assumed to possess.  The result is then used to give a new proof of the quantified Katznelson-Tzafriri theorem recently obtained in \cite{Se2}.
 \end{abstract}

\subjclass[2010]{Primary: 40E05, 47A05; secondary: 37A25.}
\keywords{Ingham's theorem, Tauberian theorem, sequences, quantified, rates of decay, Katznelson-Tzafriri theorem.}

\maketitle

\section{Introduction}\label{intro} 

One of the cornerstones in the asymptotic theory of operators is the Katznelson-Tzafriri theorem \cite[Theorem~1]{KT86}, which states the following.

\begin{thm}\label{KT_thm}
Let $X$ be a complex Banach space and suppose that $T\in\B(X)$ is power-bounded. Then 
\begin{equation}\label{eq:KT}
\lim_{n\to\infty}\|T^n(I-T)\|=0
\end{equation}
if any only if $\sigma(T)\cap\T\subset\{1\}$.
\end{thm}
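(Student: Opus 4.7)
My plan is to split Theorem~\ref{KT_thm} into its two implications. The necessity I would handle by a direct spectral calculation, and the sufficiency I would reduce to a Tauberian statement about bounded operator-valued sequences, to which the quantified Tauberian theorem that this paper is built around can be applied.

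For the necessity, assume (\ref{eq:KT}) and pick $\lambda\in\sigma(T)\cap\T$. Since $T$ is power-bounded, $\sigma(T)\subset\overline{\DD}$, so $\lambda$ lies on the topological boundary of $\sigma(T)$ and therefore in the approximate point spectrum; there are unit vectors $x_k\in X$ with $(T-\lambda)x_k\to 0$ as $k\to\infty$. Telescoping $T^n-\lambda^n I$ and using $\sup_j\|T^j\|<\infty$ yields
\begin{equation*}
\|T^n(I-T)x_k-(1-\lambda)\lambda^n x_k\|\longrightarrow 0\quad\text{as }k\to\infty,
\end{equation*}
for each fixed $n$. Choosing $n$ so that $\|T^n(I-T)\|<\varepsilon$ and then $k$ large forces $|1-\lambda|\le\varepsilon$, and letting $\varepsilon\downarrow 0$ gives $\lambda=1$.

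For the sufficiency I would treat $a_n:=T^n(I-T)$ as a bounded $\B(X)$-valued sequence and look at its generating function on the open disc,
\begin{equation*}
G(z)=\sum_{n=0}^{\infty}a_nz^n=(I-T)(I-zT)^{-1},\quad|z|<1.
\end{equation*}
Under the spectral hypothesis $\sigma(T)\cap\T\subset\{1\}$, the resolvent identity shows that $G$ extends holomorphically through every point of $\T\setminus\{1\}$. The Tauberian step is then to pass from the boundary behaviour of $G$ and the boundedness of $(a_n)$ to the conclusion $\|a_n\|\to 0$, which is precisely the kind of conclusion the Tauberian theorem for sequences is designed to produce.

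The principal obstacle is the behaviour of $G$ near $z=1$: the spectral assumption alone does not control the growth of $(I-zT)^{-1}$ as $z\to 1$ from inside $\DD$, and one must verify that the annihilating prefactor $(I-T)$ tames the potential singularity strongly enough for $G$ to possess a boundary value of the quality required by the Tauberian theorem. This is the delicate point at which the paper's machinery, calibrated to exactly such boundary singularities on $\T$ with a single exceptional point, is brought to bear.
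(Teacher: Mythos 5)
Your split into necessity and sufficiency is sensible. The necessity argument via the approximate point spectrum is correct and self-contained; the paper itself quotes Theorem~\ref{KT_thm} from \cite{KT86} without reproving it, so there is no paper analogue to compare against, but your telescoping estimate $\|T^n(I-T)x_k-(1-\lambda)\lambda^n x_k\|\to0$ does force $\lambda=1$ exactly as you describe. For sufficiency, your reduction follows the same route as the paper's proof of the quantified Theorem~\ref{KT_quant}: set $x_n=T^n(I-T)$, identify $G_x(\lambda)=(I-T)R(\lambda,T)$ (your $G(z)=(I-T)(I-zT)^{-1}$ under $z=1/\lambda$), and use holomorphy of the resolvent across $\T\backslash\{1\}$ to produce a boundary function, then invoke Theorem~\ref{Ing}.

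Two points need correction. First, you invoke only the boundedness of $(x_n)$, but Theorem~\ref{Ing} also requires the bounded partial sums condition \eqref{bdd}, and Remark~\ref{rem0} records that neither hypothesis can be dropped. Here $\sum_{k=0}^n x_k = I - T^{n+1}$, which is bounded by power-boundedness, so \eqref{bdd} does hold --- but this is an essential hypothesis and must be checked. Second, the ``principal obstacle'' you flag near $z=1$ is not actually an obstacle for the unquantified statement. The class $\Lloc(\T\backslash\{1\};X)$ and the boundary-function condition \eqref{extension} are tested only against $\psi\in C_0(-\pi,\pi)$, which vanish in a neighbourhood of $0$; no control whatever is required on $F_x(\e^{\ii\theta})$ as $\theta\to0$. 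Continuity of the holomorphic extension on a neighbourhood of $\T\backslash\{1\}$ already makes $F_x$ a boundary function in the sense of \eqref{extension}, and the first (unquantified) assertion of Theorem~\ref{Ing} then yields $\|T^n(I-T)\|\to0$ directly. The role you attribute to the prefactor $(I-T)$ in taming the singularity --- supplying the extra factor $|\theta|$ in \eqref{dom_fun} --- is genuinely needed, but only for the quantified rate in Theorem~\ref{KT_quant}, not for Theorem~\ref{KT_thm}.
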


Here $\B(X)$ denotes the algebra of bounded linear operators on a complex Banach space $X$, $\sigma(T)$ denotes the {spectrum} of the operator $T\in\B(X)$, and an operator $T\in\B(X)$ is said to be {power-bounded} if $\sup_{n\ge0}\|T^n\|<\infty$. Moreover, $\T$ stands for the unit circle $\{\lambda\in\CC:|\lambda|=1\}.$ 

Limits of the type appearing in \eqref{eq:KT} play an important role for instance in the theory of iterative methods (see \cite{Ne93}), so it is natural to ask at what \emph{speed} convergence takes place. If $\sigma(T)\cap\T=\emptyset$ the decay is at least exponential, with the rate determined by the spectral radius of $T$,  so the real interest is in the non-trivial case where $\sigma(T)\cap\T=\{1\}$.  Given a continuous non-increasing function $m:(0,\pi]\to[1,\infty)$ such that $\|R(\e^{\ii\theta},T)\|\leq m(|\theta|)$ for $0<|\theta|\leq\pi$, it is shown in \cite[Theorem~2.11]{Se2} that, for any $c\in(0,1)$,  
$$\|T^n(I-T)\|=O\big(\mlog^{-1}(cn)\big),\quad n\to\infty,$$
where $\mlog^{-1}$ is the inverse function of the map $\mlog$ defined by
\begin{equation}\label{mlog}
\mlog(\varepsilon)=m(\varepsilon)\log\left(1+\frac{m(\varepsilon)}{\varepsilon}\right),\quad0<\varepsilon\leq\pi,
\end{equation}
and where the statement $x_n=O(y_n)$, $n\to\infty$, for two sequences $(x_n)$, $(y_n)$ of non-negative terms, means that there exists a constant $C>0$ such that $x_n\le C y_n$ for all sufficiently large $n\ge0$. Moreover, this result is optimal in an important special case; see Remark~\ref*{KT_rem}\eqref{opt_rem} below. 

The main new result of this paper, Theorem~\ref{Ing}, is a Tauberian theorem for  sequences. The result is formulated for bounded vector-valued sequences but, to the knowledge of the author, is new even in the scalar-valued case. It can be viewed as a discrete analogue of Ingham's classical Tauberian theorem for functions; however, it includes an estimate on the rate of decay. This is achieved by adapting a new technique developed recently in the setting of $C_0$-semigroups in \cite{CS} and going back to \cite{BCT}.  The result is then used, in Theorem~\ref{KT_quant}, to give a new proof of the quantified version of Theorem~\ref{KT_thm} discussed above. For further related results in the general area may be found in \cite{AOFR87},  \cite{Ba94b}, \cite{Du08a}, \cite{KT86}, \cite{Le14}, \cite{Ne11}, \cite{Ra88}, \cite{Rie16} and the references they contain.

\section{Main results}\label{ingham}

\subsection{Preliminaries}

Let $X$ be a complex Banach space and write $C_0(-\pi,\pi)$ for the set of  continuous functions $\psi:[-\pi,\pi]\to\CC$ which vanish in a neighbourhood of zero and satisfy $\psi(-\pi)=\psi(\pi)$. Further let  $\Lloc(\T\backslash\{1\};X)$ denote the set of  functions $F:\T\backslash\{1\}\to X$ such that the map $\theta\mapsto \psi(\theta)F(\e^{\ii\theta})$, interpreted as taking the value zero when $\psi$ does, lies in $L^1(-\pi,\pi;X)$ for all $\psi\in C_0(-\pi,\pi)$. Let $\EE=\{\lambda\in\CC:|\lambda|>1\}$, the exterior of the closed unit disc. Given a holomorphic function $G:\EE\to X$  and given $F\in \Lloc(\T\backslash\{1\};X)$, $F$ will be said to be a \emph{boundary function} for $G$ if 
\begin{equation}\label{extension}
\lim_{r\to1+}\int_{-\pi}^\pi \psi(\theta)G\big(r\e^{\ii\theta}\big)\,\dd\theta=\int_{-\pi}^\pi \psi(\theta)F\big(\e^{\ii\theta}\big)\,\dd\theta
\end{equation}
for all $\psi\in C_0(-\pi,\pi)$. For $k\ge1$, let $C^k(\T\backslash\{1\};X)$ denote the set of functions $F:\T\backslash\{1\}\to X$ which are $k$-times continuously differentiable, with $\T\backslash\{1\}$  viewed as a one-dimensional manifold, and let $C^\infty(\T\backslash\{1\};X)=\bigcap_{k\ge1}C^k(\T\backslash\{1\};X)$. 

\subsection{A quantified Tauberian theorem}

Theorem~\ref{Ing} below is the main result of this paper and can be viewed as a discrete analogue of Ingham's Tauberian theorem for functions; see \cite{Ing33} and also  \cite{Ka34}. In the statement of the result,  given $x\in\ell^\infty(\ZZ_+;X)$, $G_x:\EE\to X$ denotes the holomorphic function given by 
$$G_x(\lambda)=\sum_{n\geq0}\frac{x_n}{\lambda^{n+1}},\quad |\lambda|>1.$$
The theorem shows that if $x\in\ell^\infty(\ZZ_+;X)$ has uniformly bounded partial sums and if $G_x$ possesses a boundary function $F_x$, then $x\in c_0(\ZZ_+;X)$. Moreover, the result gives an estimate on the rate of decay of $\|x_n\|$ as $n\to\infty$, the quality of which depends on the smoothness and the rate of growth near the point $1$ of $F_x$. The proof uses a technique which goes back to \cite{BCT} and has been extended recently in \cite{CS}. One advantage of this approach over the contour integral method used to obtain \cite[Theorem~2.11]{Se2} is that it extends to the case in which $F_x$ is only finitely often continuously differentiable. Given a continuous non-increasing function $m:(0,\pi]\to[1,\infty)$ and $k\ge1$, define the function $m_k:(0,\pi]\to(0,\infty)$ by
\begin{equation}\label{m_k}
m_k(\varepsilon)=m(\varepsilon)\left(\frac{m(\varepsilon)}{\varepsilon}\right)^{1/k},
\end{equation}
noting that, for each $k\ge1$, $m_k$ maps bijectively onto its range.

\begin{thm}\label{Ing}
Let $X$ be a complex Banach space and let $x\in \ell^\infty(\ZZ_+;X)$ be such that
\begin{equation}\label{bdd}
\sup_{n\geq0}\bigg\|\sum_{k=0}^n x_k\bigg\|<\infty.
\end{equation}
If $G_x$ admits a boundary function $F_x\in \smash{\Lloc}(\T\backslash\{1\};X)$, then $x\in c_0(\ZZ_+;X)$.  

Moreover, given a continuous non-increasing function $m:(0,\pi]\to[1,\infty)$,  the following hold.
\begin{enumerate}[(a)]
\item\label{Ck_case} Suppose that $F_x\in C^k(\T\backslash\{1\};X)$ for some  $k\ge1$ and that 
\begin{equation}\label{Ck_dom_fun}
\| F_x^{(j)}(\e^{\ii\theta})\|\le C |\theta|^{\ell-j}m(|\theta|)^{\ell+1},\quad 0<|\theta|\le\pi,\;0\le j\le\ell\le k,
\end{equation}
for some constant $C>0$. Then, for any $c>0$,
 \begin{equation}\label{bound_Ck}
\|x_n\|=O\left(m_k^{-1}\big(cn)\right),\quad n\to\infty,
\end{equation}
where $m_k^{-1}$ is the inverse function of the map $m_k$ defined in \eqref{m_k}.
\item\label{hol_case} Suppose that $F_x\in C^\infty(\T\backslash\{1\};X)$ and that 
\begin{equation}\label{dom_fun}
\| F_x^{(j)}(\e^{\ii\theta})\|\le C j!|\theta| m(|\theta|)^{j+1},\quad 0<|\theta|\le\pi,\;j\ge0,
\end{equation}
for some constant $C>0$. Then, for any $c\in(0,1)$,
\begin{equation}\label{bound}
\|x_n\|=O\left(\mlog^{-1}(cn)+\frac{1}{n}\right),\quad n\to\infty,
\end{equation}
where $\mlog^{-1}$ is the inverse function of the map $\mlog$ defined in \eqref{mlog}.
\end{enumerate}
\end{thm}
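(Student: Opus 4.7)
My plan is to adapt to the discrete setting the contour-integral technique recently developed in \cite{CS} for $C_0$-semigroups (building on \cite{BCT}). The starting point is Cauchy's integral formula
\begin{equation*}
x_n=\frac{r^{n+1}}{2\pi}\int_{-\pi}^{\pi}\e^{\ii(n+1)\theta}\,G_x(r\e^{\ii\theta})\,\dd\theta,\qquad r>1,
\end{equation*}
combined with a smooth cutoff $\varphi_\varepsilon\in C_0(-\pi,\pi)$ satisfying $\varphi_\varepsilon\equiv1$ on $\{|\theta|\geq\varepsilon\}$, $\varphi_\varepsilon\equiv0$ on $\{|\theta|\leq\varepsilon/2\}$, and $\|\varphi_\varepsilon^{(j)}\|_\infty\lesssim\varepsilon^{-j}$; here $\varepsilon\in(0,\pi]$ is a parameter to be chosen at the end in terms of $n$. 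The partition $1=\varphi_\varepsilon+(1-\varphi_\varepsilon)$ decomposes $x_n=J_{\mathrm{far}}(n,\varepsilon)+J_{\mathrm{near}}(n,\varepsilon)$, with the two parts handled by different methods.

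For $J_{\mathrm{far}}$, the boundary function property \eqref{extension} permits taking $r\to 1^+$ to obtain
\begin{equation*}
J_{\mathrm{far}}(n,\varepsilon)=\frac{1}{2\pi}\int_{-\pi}^{\pi}\e^{\ii(n+1)\theta}\varphi_\varepsilon(\theta)F_x(\e^{\ii\theta})\,\dd\theta,
\end{equation*}
which I would estimate by integrating by parts $k$ times in case~(a), and a number $N=N(n)$ of times in case~(b). Each integration contributes a factor $(n+1)^{-1}$; the Leibniz expansion produces terms $\binom{N}{j}\varphi_\varepsilon^{(j)}(\theta)\,\partial_\theta^{N-j}[F_x(\e^{\ii\theta})]$, and on $\supp\varphi_\varepsilon^{(j)}\subseteq[\varepsilon/2,\varepsilon]$ for $j\geq1$ one combines $\|\varphi_\varepsilon^{(j)}\|_\infty|\theta|^j=O(1)$ with the pointwise bounds \eqref{Ck_dom_fun} (resp.~\eqref{dom_fun}) on $F_x^{(N-j)}$ to obtain contributions of size roughly $m(\varepsilon)^{N+1}$; the $j=0$ term uses the derivative bound directly. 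The net estimate is schematically $\|J_{\mathrm{far}}\|\lesssim m(\varepsilon)^{N+1}/n^N$, with additional factorial factors in case~(b).

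For $J_{\mathrm{near}}$ one cannot take $r\to 1^+$, and instead I would keep $r=1+\delta$ for an appropriate $\delta=\delta(n,\varepsilon)>0$. Abel summation yields $G_x(\lambda)=((\lambda-1)/\lambda)H(\lambda)$ with $H(\lambda)=\sum_{m\geq0}s_m\lambda^{-(m+1)}$ and $s_m=\sum_{k=0}^m x_k$; \eqref{bdd} then gives $\|H(\lambda)\|\leq M/(|\lambda|-1)$ where $M=\sup_m\|s_m\|$. Hence $\|G_x(r\e^{\ii\theta})\|\lesssim 1+|\theta|/\delta$ on the support $|\theta|\leq\varepsilon$ of $1-\varphi_\varepsilon$, and Cauchy estimates applied to $G_x$ in the exterior allow a similar control of higher derivatives $\partial_\theta^{j}G_x$. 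Repeated integration by parts on $J_{\mathrm{near}}$ together with a judicious choice of $\delta$ balancing the growth $r^{n+1}\leq\e^{(n+1)\delta}$ against the powers of $1/\delta$ from the derivative bounds yields a bound of the same order as $J_{\mathrm{far}}$, up to a residual of size $O(1/n)$ coming from the boundary terms.

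Combining the two estimates and optimising produces \eqref{bound_Ck} via the choice $\varepsilon=m_k^{-1}(cn)$ in case~(a), and \eqref{bound} via $\varepsilon=\mlog^{-1}(cn)$ together with $N\sim n/m(\varepsilon)$ in case~(b), the $1/n$ summand in \eqref{bound} being the residual from $J_{\mathrm{near}}$. The qualitative conclusion $x\in c_0(\ZZ_+;X)$ either drops out of the quantitative bounds under weakened hypotheses or can be established by a direct Ingham-style approximation argument that does not require smoothness of $F_x$. The main technical difficulty will be the bookkeeping in case~(b): the factorial $j!$ in \eqref{dom_fun}, combined with the Cauchy-type bounds on $\partial_\theta^{j}G_x$, must be balanced against the gain $(n+1)^{-N}$ from repeated integration by parts, and choosing $N$ optimally is exactly what produces the logarithmic factor encoded by $\mlog$.
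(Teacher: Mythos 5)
Your handling of the ``far'' part matches the paper exactly: the representation $\tfrac{1}{2\pi}\int\e^{\ii(n+1)\theta}\varphi_\varepsilon(\theta)F_x(\e^{\ii\theta})\,\dd\theta$, the $k$-fold integration by parts in case~(a), the $N(n)$-fold integration by parts with Stirling in case~(b), and the choices $\varepsilon=m_k^{-1}(cn)$, $\varepsilon=\mlog^{-1}(cn)$ and $N\sim n/m(\varepsilon)$ are precisely what the paper does. The genuine divergence is in the ``near'' part, and there the proposal has a gap that I do not see how to close. First, the decomposition is inconsistent as stated: the Cauchy representation $x_n=\tfrac{r^{n+1}}{2\pi}\int\e^{\ii(n+1)\theta}G_x(r\e^{\ii\theta})\,\dd\theta$ holds for each fixed $r>1$, and taking $r\to1^+$ only in the part carrying $\varphi_\varepsilon$ while keeping $r=1+\delta$ in the part carrying $1-\varphi_\varepsilon$ destroys the identity; one would also have to control the discrepancy between $\tfrac{r^{n+1}}{2\pi}\int\varphi_\varepsilon G_x(r\e^{\ii\cdot})$ and its limit, and no tools for that are available under the hypotheses (in case (a) there is no holomorphic extension of $G_x$ past $\T$ at all).

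Second, even granting a clean split, the proposed near-part bound fails quantitatively. The Abel-summation estimate $\|G_x(r\e^{\ii\theta})\|\lesssim 1+|\theta|/\delta$ gives $\int_{|\theta|\le\varepsilon}\|G_x(r\e^{\ii\theta})\|\,\dd\theta\lesssim\varepsilon+\varepsilon^2/\delta$, while the prefactor $r^{n+1}\le\e^{(n+1)\delta}$ forces $\delta\lesssim1/n$, yielding $\|J_{\mathrm{near}}\|\gtrsim n\varepsilon^2$, which is much larger than the target $O(\varepsilon)$ in the entire regime of interest ($n\varepsilon\to\infty$). Integration by parts does not rescue this: Cauchy estimates give $\|\partial_\theta^{j}G_x(r\e^{\ii\theta})\|\lesssim j!\,|\theta|/\delta^{j+1}$, so each integration by parts gains $n^{-1}$ but costs $\delta^{-1}\gtrsim n$, and with the factorials the balance comes out strictly worse than $\varepsilon$ (one finds an extra factor of order $(n\varepsilon)^{1/2}$ at best). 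The paper circumvents all of this by never going off the unit circle for the near part: it sets $x^\varepsilon=x*y^\varepsilon$ with $y^\varepsilon\in\ell^1(\ZZ)$ the Fourier coefficient sequence of $\psi_\varepsilon$, writes $x_n-x_n^\varepsilon=\sum_{j\ge0}s_j(z_{n-j}^\varepsilon-z_{n-j-1}^\varepsilon)$ by \emph{discrete} Abel summation, and estimates $|z_n^\varepsilon-z_{n-1}^\varepsilon|\lesssim\min(\varepsilon^2,n^{-2})$ to obtain $\|x_n-x_n^\varepsilon\|\lesssim\varepsilon$ directly; the boundary-function hypothesis enters only once, softly, to identify $x_n^\varepsilon$ with the Fourier integral of $F_x$. (As a side remark, the technique of \cite{CS} is a Fourier-multiplier method and is explicitly contrasted by the paper with the contour-integral method of \cite{Se2}; your opening sentence conflates the two.)
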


\begin{rem}\label{rem0}
\begin{enumerate}[(a)]
\item Neither condition \eqref{bdd} nor the assumption that $G_x$ admits a boundary function can be dropped, even in the scalar-valued case, as can be seen by considering the sequences $x=(1,1,1,\dotsc)$ and $x=(+1, -1,+ 1, -1,\dotsc)$, respectively.
\item Note that if $m(\varepsilon)\ge c/\varepsilon$ for some $c>0$, then \eqref{Ck_dom_fun} is satisfied if 
$$\| F_x^{(j)}(\e^{\ii\theta})\|\le C m(|\theta|)^{j+1}\quad0<|\theta|\le\pi,\;0\le j\le k,$$
 for some constant $C>0$.
\item  
Suppose that $m:(0,\pi]\to[1,\infty)$ is as  in Theorem~\ref{Ing}. If $G_x$ has a holomorphic extension, denoted also by $G_x$, to a region containing 
$$\Omega_{m,\theta}= \left\{\lambda\in\CC:|\lambda-\e^{\ii\theta}|\le\frac{1}{m(|\theta|)}\right\}$$
for  $0<|\theta|\le\pi$ and if 
\begin{equation*}\label{O_bd}
\|G_x(\lambda)\|\le C|\theta|m(|\theta|),\quad \lambda\in\Omega_{m,\theta},\;0<|\theta|\le\pi,
\end{equation*}
for some constant $C>0$, then a simple estimate using Cauchy's integral formula shows that \eqref{dom_fun} holds for the restriction $F_x$ of $G_x$ to $\T\backslash\{1\}$. This is analogous to the results for Laplace transforms in \cite{BD08} and \cite{Ma11}. Conversely, if $F_x\in C^\infty(\T\backslash\{1\};X)$ and \eqref{dom_fun} holds, then $F_x$ extends holomorphically to the region $\Omega_m$ given by 
\begin{equation*}\label{Om}
\Omega_m= \left\{\lambda\in\CC:|\lambda-\e^{\ii\theta}|<\frac{1}{m(|\theta|)},\;0<|\theta|\le\pi\right\}.
\end{equation*}
Furthermore, if  $G:\EE\to X$ is a holomorphic function which admits a boundary function $F_x\in C^\infty(\T\backslash\{1\};X)$ satisfying \eqref{dom_fun}, then $G$  has a holomorphic extension which agrees with that of $F_x$ on $\Omega_m$. This follows from   a standard Cayley transform argument combined with the `edge-of-the-wedge theorem'; see for instance \cite[\S2 Theorem~B]{Ru71}.
\end{enumerate}
\end{rem}

\begin{proof}[Proof of Theorem~\ref{Ing}]
Let $\psi:[-\pi,\pi]\to\RR$ be a smooth function such that  $\psi(\theta)=0$ for $|\theta|\le1$, $0\le\psi(\theta)\le1$ for $1\le|\theta|\le2$ and $\psi(\theta)=1$ for $2\le|\theta|\le\pi.$ For $\varepsilon\in(0,\pi/2]$, let $\psi_\varepsilon, \varphi_\varepsilon:[-\pi,\pi]\to\RR$ be given by $\psi_\varepsilon(\theta)=\psi(\theta/\varepsilon)$ and  $\varphi_\varepsilon(\theta)=1-\psi_\varepsilon(\theta),$ $-\pi\le\theta\le\pi$. Moreover, for $n\in\ZZ$, let
$$y^\varepsilon_n=\frac{1}{2\pi}\int_{-\pi}^\pi \e^{\ii n\theta}\psi_\varepsilon(\theta)\,\dd\theta\quad \mbox{and}\quad z^\varepsilon_n=\frac{1}{2\pi}\int_{-\pi}^\pi \e^{\ii n\theta}\varphi_\varepsilon(\theta)\,\dd\theta.$$
Then $y^\varepsilon_0=1-z_0^\varepsilon$ and $y^\varepsilon_n=-z_n^\varepsilon$ for $n\ne0$, and a simple calculation using integration by parts shows that $y^\varepsilon,z^\varepsilon\in\ell^1(\ZZ)$. Let $x^\varepsilon\in\ell^\infty(\ZZ;X)$ be given by $x^{\varepsilon}=x*y^{\varepsilon}$, so that $\smash{x^{\varepsilon}_n=\sum_{j\geq0}x_j y^{\varepsilon}_{n-j}}$ for $n\in\ZZ.$ Then, setting $\smash{s_n=\sum_{j=0}^n x_j}$ for $n\ge0$,
\begin{equation}\label{x-z}
x_n-x_n^\varepsilon=(x*z^\varepsilon)_n=\sum_{j\ge0}s_j\big(z_{n-j}^\varepsilon-z_{n-j-1}^\varepsilon\big),\quad n\ge0.
\end{equation}
Since $\varphi_\varepsilon(\theta)=0$ for $2\varepsilon\le|\theta|\le\pi$,
\begin{equation}\label{small}
|z_n^\varepsilon-z_{n-1}^\varepsilon|=\left|\frac{1}{2\pi}\int_{-\pi}^\pi \e^{\ii n\theta}\big(1-\e^{-\ii\theta}\big)\varphi_\varepsilon(\theta)\,\dd\theta\right|\lesssim \int_{-2\varepsilon}^{2\varepsilon}|\theta|\,\dd\theta\lesssim\varepsilon^2
\end{equation}
for all $n\in\ZZ$. Here  and in what follows the statement $p\lesssim q$ for  real-valued quantities $p$ and $q$  means that $p\leq Cq$ for some number $C>0$ which is independent of all the parameters that are free to vary, in this case of $\varepsilon$ and $n$. Similarly, for $n\ne0$, integrating by parts twice gives
\begin{equation}\label{big}
|z_n^\varepsilon-z_{n-1}^\varepsilon|=\left|\frac{1}{2\pi n^2}\int_{-\pi}^\pi \e^{\ii n\theta}\frac{\dd^2}{\dd\theta^2}\Big(\big(1-\e^{-\ii\theta}\big)\varphi_\varepsilon(\theta)\Big)\,\dd\theta\right|\lesssim \frac{1}{n^2}.
\end{equation}
For $n\ge0$ and $\varepsilon\in(0,\pi/2]$, let $\smash{P_{n,\varepsilon}=\{j\ge0:|j-n|\le\frac{1}{\varepsilon}}\}$ and $Q_{n,\varepsilon}=\{j\ge0:|j-n|>\smash{\frac{1}{\varepsilon}}\}$. Using \eqref{small} and \eqref{big} in \eqref{x-z}, together with the fact that $s\in\ell^\infty(\ZZ_+;X)$ by assumption \eqref{bdd}, it follows that 
\begin{equation}\label{x-xe}
\|x_n-x_n^\varepsilon\|\lesssim \sum_{j\in P_{n,\varepsilon}}\varepsilon^2+\sum_{j\in Q_{n,\varepsilon}}\frac{1}{(n-j)^2} \lesssim\varepsilon,\quad n\ge0.
\end{equation} 
Now, by the dominated convergence theorem, Fubini's theorem and \eqref{extension}, 
\begin{equation}\label{RL}
\begin{aligned}
x^{\varepsilon}_n&=\lim_{r\to1+}\sum_{j\geq0}\frac{x_j}{r^{j+1}}y^{\varepsilon}_{n-j}\\
&=\lim_{r\to1+}\frac{1}{2\pi}\sum_{j\geq0}\int_{-\pi}^\pi \frac{x_j}{r^{j+1}}\e^{\ii(n-j)\theta}\psi_\varepsilon(\theta)\,\dd\theta   \\
&=\lim_{r\to1+}\frac{1}{2\pi}\int_{-\pi}^\pi\e^{\ii(n+1)\theta}\psi_\varepsilon(\theta)G_x\big(r\e^{\ii\theta}\big)\,\dd\theta\\
&=\frac{1}{2\pi}\int_{-\pi}^\pi\e^{\ii(n+1)\theta}\psi_\varepsilon(\theta)F_x\big(\e^{\ii\theta}\big)\,\dd\theta
\end{aligned}
\end{equation}
for all $n\in\ZZ$ and  $\varepsilon\in(0,\pi/2]$. Hence $x^\varepsilon\in c_0(\ZZ;X)$ for each $\varepsilon\in(0,\pi/2]$ by the Riemann-Lebesgue lemma, and it follows from \eqref{x-xe} that $x\in c_0(\ZZ_+;X)$.

Suppose  $F_x\in \smash{C^k}(\T\backslash\{1\};X)$ for some  $k\ge1$. Integrating by parts $k$ times in \eqref{RL} and estimating crudely by means of \eqref{Ck_dom_fun} gives 
\begin{equation}\label{xe}
\|x_n^\varepsilon\|\lesssim \frac{1}{n^k}\sum_{j=0}^k m(\varepsilon)^{j+1}\lesssim \frac{m(\varepsilon)^{k+1}}{n^k}
\end{equation}
for all $n\ge1$ and all  $\varepsilon\in(0,\pi/2]$. Given $c>0$ and $n\ge1$ sufficiently large, let $\varepsilon_n\in (0,\pi/2]$ be given by $\smash{\varepsilon_n=m_k^{-1}(cn)}$. The estimate \eqref{bound_Ck} follows from \eqref{x-xe} and \eqref{xe} on setting on setting $\varepsilon=\varepsilon_{n}$ for sufficiently large $n\ge1$.

Now suppose that $F_x\in C^\infty(\T\backslash\{1\};X)$. In order to obtain the estimate \eqref{bound}, it is necessary to make explicit choices of the functions $\psi_\varepsilon,\varphi_\varepsilon:[-\pi,\pi]\to\RR$ and hence of the sequences $y^\varepsilon,z^\varepsilon\in\ell^1(\ZZ)$ of their Fourier coefficients. Thus, given $\varepsilon\in(0,\pi/2]$,  let $y^\varepsilon\in \ell^1(\ZZ)$ be given by $\smash{y_0^\varepsilon=1-\frac{3\varepsilon}{2\pi}}$ and 
$$y^{\varepsilon}_n=\frac{\cos(2n\varepsilon)-\cos (n\varepsilon)}{\varepsilon\pi n^2},\quad n\ne0,$$
and define $z^\varepsilon\in \ell^1(\ZZ)$  by $z^\varepsilon_0=1-y^\varepsilon_0$ and $z^\varepsilon_n=-y^\varepsilon_n$ for $n\ne0$. Moreover, let $x^{\varepsilon}\in\ell^\infty(\ZZ;X)$ be given by $x^{\varepsilon}=x*y^{\varepsilon}$, as before. Then the function
$$\psi_\varepsilon(\theta)=\sum_{n\in\ZZ} \frac{y_n^\varepsilon}{\e^{\ii n\theta}},\quad -\pi\le\theta\le\pi,$$
satisfies $\psi_\varepsilon(\theta)=0$ for $|\theta|\leq\varepsilon$, $\psi_\varepsilon(\theta)=\varepsilon^{-1}|\theta|-1$ for $\varepsilon\leq |\theta|\leq2\varepsilon$, and $\psi_\varepsilon(\theta)=1$ for $2\varepsilon\leq|\theta|\leq\pi$. Now \eqref{x-z} still holds but the above method for estimating $|z_n^\varepsilon-z_{n-1}^\varepsilon|$, $n\in\ZZ$, is no longer applicable since $\varphi_\varepsilon$ is not differentiable. Instead, consider the function  $\phi:\RR\to\RR$ given by $\phi(0)=0$ and 
$$\phi(t)=\frac{2}{\pi}\left(\frac{\cos(2t)-\cos(t)}{t^3}+\frac{\sin(2t)-\frac{1}{2}\sin(t)}{t^2}\right),\quad t\ne0.$$
Then $\phi\in L^1(\RR)$ and 
$$z_{n}^\varepsilon-z_{n-1}^\varepsilon=\varepsilon\int_{\varepsilon(n-1)}^{\varepsilon n}\phi(t)\,\dd t,\quad n\in\ZZ,$$
and it follows from \eqref{bdd} and \eqref{x-z} that
\begin{equation}\label{difference}
\|x_n-x_n^\varepsilon\|\le\varepsilon\sum_{j\ge0}\int_{\varepsilon(n-j-1)}^{\varepsilon (n-j)}|\phi(t)|\,\dd t
\lesssim \varepsilon,\quad n\ge0.
\end{equation}
Now, by the same argument as in \eqref{RL},
$$x_n^\varepsilon=\frac{1}{2\pi}\int_{-\pi}^\pi\e^{\ii(n+1)\theta}\psi_\varepsilon(\theta)F_x\big(\e^{\ii\theta}\big)\,\dd\theta,\quad n\in\ZZ.$$
Integrating by parts $k\ge1$ times gives
$$\begin{aligned}
x^\varepsilon_n&=A_{n,k}(-1)^k\int_{\varepsilon\leq|\theta|\leq\pi}\e^{\ii(n+k+1)\theta}\psi_\varepsilon(\theta)F_x^{(k)}\big(\e^{\ii\theta}\big)\,\dd\theta\\
&\qquad+B_{n,k}\frac{(-1)^{k}}{2\pi\varepsilon\ii}\int_{\varepsilon\leq|\theta|\leq2\varepsilon}\e^{\ii(n+k)\theta}F_x^{(k-1)}\big(\e^{\ii\theta}\big)\sgn\theta\,\dd\theta\\
&\qquad +\sum_{j=0}^{k-2}C_{n,j}\frac{(-1)^{j}}{2\pi\varepsilon}\left[\e^{\ii(n+j+1)\theta}F_x^{(j)}\big(\e^{\ii\theta}\big)+\e^{-\ii(n+j+1)\theta}F_x^{(j)}\big(\e^{-\ii\theta}\big)\right]_\varepsilon^{2\varepsilon}
\end{aligned}$$
for all $n\geq0$, where 
\begin{gather*} 
A_{n,k}=\frac{n!}{(n+k)!}, \quad B_{n,k}=\frac{n!}{(n+k-1)!}\sum_{j=1}^k\frac{1}{n+j}\\
\mbox{and} \quad C_{n,j}=\frac{n!}{(n+j+1)!}\sum_{\ell=1}^{j+1}\frac{1}{n+\ell}
\end{gather*}
for $0\leq j\leq k-2$. Now $A_{n,k}\le n^{-k}$, $B_{n,k}\le kn^{-k}$ and $C_{n,j}\le(j+1)n^{-(j+2)}$ for $n\ge1$. Thus \eqref{dom_fun} gives
\begin{equation}\label{est}
\|x_n^\varepsilon\|\lesssim k! \frac{ m(\varepsilon)^{k+1}}{n^k}+\frac{m(\varepsilon)}{n^2}\sum_{j=0}^{k-2}(j+1)!\frac{ m(\varepsilon)^j}{n^j}, \quad n,k\ge1,
\end{equation}
for  $\varepsilon\in(0,\pi/2]$. Denote the two terms on the right-hand side of \eqref{est} by $\smash{D^\varepsilon_{n,k}}$ and $\smash{E^\varepsilon_{n,k}}$, respectively.  For $c\in (0,1)$, Stirling's formula implies that $\smash{k!\lesssim (k/c\e )^k}$ for all $k\geq0$ and hence 
$$D^\varepsilon_{n,k}\lesssim m(\varepsilon)\left(\frac{k m(\varepsilon)}{c\e n}\right)^k,\quad n,k\ge1.$$
Let $k_{\varepsilon,n}=\lfloor cn/m(\varepsilon)\rfloor$. Then 
\begin{equation}\label{D_est}
D^\varepsilon_{n,k_{\varepsilon,n}}\lesssim m(\varepsilon)\exp\left(-\frac{cn}{m(\varepsilon)}\right)
\end{equation}
for all  $\varepsilon\in(0,\pi/2]$ and all $n\ge1$ such that $k_{\varepsilon,n}\ge1$. Moreover, for such values of $\varepsilon$ and $n$, the choice of $k_{\varepsilon,n}$ ensures that  
\begin{equation}\label{E_est}
E^\varepsilon_{n,k_{\varepsilon,n}}\leq \frac{m(\varepsilon)}{n^2}\sum_{j=0}^{k-2}c^j\lesssim \frac{m(\varepsilon)}{n^2}.
\end{equation}
 Thus setting $k=k_{\varepsilon,n}$ in \eqref{est} and using \eqref{D_est} and \eqref{E_est} gives
\begin{equation}\label{x_est}
\|x_n^\varepsilon\|\lesssim m(\varepsilon)\exp\left(-\frac{cn}{m(\varepsilon)}\right)+\frac{m(\varepsilon)}{n^2}
\end{equation}
for all $\varepsilon\in(0,\pi/2]$ and $n\ge1$ as above. Let $\varepsilon_n=\smash{\mlog^{-1}(cn)}$ for $n\ge1$ sufficiently large to ensure that $k_{\varepsilon_n,n}\ge1$. For such values of $n$,
$$m(\varepsilon_n)\exp\left(-\frac{cn}{m(\varepsilon_n)}\right)\lesssim\varepsilon_n\quad\mbox{and}\quad \frac{m(\varepsilon_n)}{n^2}\lesssim\frac{1}{n},$$
so  \eqref{bound} follows from \eqref{difference} and \eqref{x_est} on setting $\varepsilon=\varepsilon_n$.
\end{proof}

\begin{rem}\label{rem1}
\begin{enumerate}[(a)]
\item The choice of $k_{\varepsilon,n}$  before equation \eqref{D_est} is motivated by the fact that, given any constant $C>0$, the function $t\mapsto (Ct)^t$, defined on $(0,\infty)$, attains its global minimum at $t=(C\e)^{-1}$.
\item  Theorem~\ref{Ing} can be extended  to the case of a finite number of singularities on the unit circle; see also \cite{Ma11} and \cite{Se2}.
\end{enumerate}
\end{rem}

The following example illustrates that the estimates in Theorem~\ref{Ing} generally improve with the smoothness of the boundary function if $m(\varepsilon)$ grows moderately fast as $\varepsilon\to0+$, but that the quality of these estimates can be independent of the degree of smoothness if this blow-up is very rapid.

\begin{ex}\label{poly_ex}
In Theorem~\ref{Ing}, consider the function $m:(0,\pi]\to[1,\infty)$ given by $m(\varepsilon)=(\pi/\varepsilon)^{\alpha}$, where $\alpha\ge1$. If $F_x\in C^k(\T\backslash\{1\};X)$ for some $k\ge1$ and \eqref{Ck_dom_fun} holds, then \eqref{bound_Ck} gives
$$\|x_n\|=O\left(n^{-\frac{k}{\alpha(k+1)+1}}\right),\quad n\to\infty,$$
and, if $F_x\in C^\infty(\T\backslash\{1\};X)$ and \eqref{dom_fun} holds, \eqref{bound} becomes
\begin{equation}\label{u_bd}
\|x_n\|=O\Bigg(\bigg(\frac{\log n}{n}\bigg)^{\frac{1}{\alpha}}\Bigg),\quad n\to\infty.
\end{equation}
Thus the estimate improves with the smoothness of $F_x$. By contrast, if the assumptions of Theorem~\ref{Ing}  are satisfied for  $m(\varepsilon)=\exp(\varepsilon^{-\alpha})$ with  $\alpha>0$, then \eqref{bound_Ck} for any $k\ge1$ and \eqref{bound} all become 
$$\|x_n\|=O\left((\log n)^{-\frac{1}{\alpha}}\right),\quad n\to\infty,$$
so in this case the quality of the estimate is unaffected by the smoothness of the boundary function.
\end{ex}

\subsection{The quantified Katznelson-Tzafriri theorem}

The purpose of this final section is to deduce from Theorem~\ref{Ing} the quantified version of Theorem~\ref{KT_thm}. This result was first obtained in \cite{Se2} by means of a contour integral argument adapted from \cite{BD08} and \cite{Ma11}. Some closely related results may be found for instance in  \cite{Du08a}, \cite{Le14} and \cite{Ne11}. 

\begin{thm}\label{KT_quant}
Let $X$ be a complex Banach space and let $T\in \B(X)$ be a power-bounded operator such that $\sigma(T)\cap\T=\{1\}$.  Suppose there exists  a continuous non-increasing function $m:(0,\pi]\to[1,\infty)$ such that  
$$\|R(\e^{\ii\theta},T)\|\le  m(|\theta|),\quad 0<|\theta|\le\pi.$$
Then, for any  $c\in(0,1)$, 
 \begin{equation}\label{KT}
 \|T^n(I-T)\|=O\big(  m_{\log}^{-1}(c n)\big),\quad n\to\infty,
 \end{equation}
where $\mlog^{-1}$ is the inverse function of the map $\mlog$ defined in \eqref{mlog}.
 \end{thm}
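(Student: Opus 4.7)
The plan is to apply Theorem~\ref{Ing}\eqref{hol_case} to the operator-valued sequence $x_n = T^n(I-T)$, viewed as an element of the Banach space $Y = \B(X)$. First one checks the preliminary hypotheses: power-boundedness of $T$ gives $x \in \ell^\infty(\ZZ_+; Y)$, while the telescoping identity
$$\sum_{k=0}^n T^k(I-T) = I - T^{n+1}$$
together with power-boundedness yields \eqref{bdd}.

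Next I identify the generating function and its boundary function. A geometric series computation gives
$$G_x(\lambda) = R(\lambda, T)(I-T) = I - (\lambda - 1)R(\lambda, T),\quad |\lambda| > 1,$$
where the second equality comes from $R(\lambda,T)(\lambda I - T) = I$. Since $\sigma(T)\cap\T = \{1\}$, the resolvent extends holomorphically across $\T\setminus\{1\}$ and hence so does $G_x$; this produces a boundary function $F_x \in C^\infty(\T\setminus\{1\}; Y)$ given by $F_x(\e^{\ii\theta}) = I - (\e^{\ii\theta}-1)R(\e^{\ii\theta},T)$, and the integral identity \eqref{extension} follows by dominated convergence.

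The heart of the argument is verifying \eqref{dom_fun}, for which I would proceed via Remark~\ref{rem0}(c). Fix $\delta\in(0,1)$ (to be chosen close to $1$) and set $\tilde m = m/\delta$. A standard Neumann series argument yields $\|R(\lambda,T)\|\le m(|\theta|)/(1-\delta)$ whenever $|\lambda-\e^{\ii\theta}|\le 1/\tilde m(|\theta|)$, so $G_x$ extends holomorphically to $\Omega_{\tilde m,\theta}$. The crucial ingredient is the automatic spectral lower bound
$$m(|\theta|) \ge \|R(\e^{\ii\theta},T)\| \ge \frac{1}{\dist(\e^{\ii\theta},\sigma(T))} \ge \frac{1}{|\e^{\ii\theta}-1|} \gtrsim \frac{1}{|\theta|},$$
which follows from $1\in\sigma(T)$. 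Combined with $|\lambda-1|\le|\lambda-\e^{\ii\theta}|+|\e^{\ii\theta}-1|\lesssim|\theta|$ for $\lambda\in\Omega_{\tilde m,\theta}$, this gives
$$\|G_x(\lambda)\| \le 1 + |\lambda-1|\,\|R(\lambda,T)\| \lesssim 1 + |\theta|\tilde m(|\theta|) \lesssim |\theta|\tilde m(|\theta|),$$
the last step again using $|\theta|\tilde m(|\theta|)\gtrsim 1$. By Remark~\ref{rem0}(c), $F_x$ then satisfies \eqref{dom_fun} with $m$ replaced by $\tilde m$.

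Applying Theorem~\ref{Ing}\eqref{hol_case} delivers $\|x_n\| = O(\tilde m_{\log}^{-1}(cn) + 1/n)$ for every $c\in(0,1)$. The main obstacle is the final bookkeeping needed to recover \eqref{KT} for \emph{every} $c'\in(0,1)$ rather than a restricted range. A direct computation using $m(\varepsilon)/\varepsilon\to\infty$ shows that $\tilde m_{\log}(\varepsilon)/m_{\log}(\varepsilon)\to 1/\delta$ as $\varepsilon\to 0+$, so $\tilde m_{\log}^{-1}(cn) \le m_{\log}^{-1}(c'n)$ asymptotically whenever $\delta c > c'$. Given $c'\in(0,1)$, one may choose $\delta,c\in(0,1)$ both sufficiently close to $1$ so that this holds, and the residual $1/n$ term is absorbed because $m\gtrsim 1/\varepsilon$ forces $m_{\log}(1/n)\gtrsim n\log n$ and hence $m_{\log}^{-1}(c'n)\gtrsim 1/n$. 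This completes the derivation of \eqref{KT}.
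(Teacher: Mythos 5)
Your proof is correct, but it takes a more roundabout route than the paper's for the key step of verifying \eqref{dom_fun}. The paper simply computes the derivatives of $F_x$ directly: differentiating $G_x(\lambda)=(I-T)R(\lambda,T)=I+(1-\lambda)R(\lambda,T)$ gives
$$F_x^{(k)}(\lambda)=(-1)^k k!\,R(\lambda,T)^k\big(I+(1-\lambda)R(\lambda,T)\big),\qquad \lambda\in\T\setminus\{1\},$$
and then $\|F_x^{(k)}(\e^{\ii\theta})\|\le k!\,m(|\theta|)^k\big(1+|\theta|m(|\theta|)\big)\le 2k!\,|\theta|m(|\theta|)^{k+1}$, where the factor $1\le|\theta|m(|\theta|)$ comes from the same spectral lower bound $m(\varepsilon)\ge|1-\e^{\ii\varepsilon}|^{-1}\ge\varepsilon^{-1}$ that you derive. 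That verifies \eqref{dom_fun} with the original $m$ and no adjustment of constants.

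You instead go through Remark~\ref{rem0}(c): you shrink to $\tilde m=m/\delta$, invoke a Neumann series argument to extend $R(\lambda,T)$ over $\Omega_{\tilde m,\theta}$, bound $\|G_x\|$ there, let the remark produce \eqref{dom_fun} for $\tilde m$, and then do a final bookkeeping argument ($\tilde m_{\log}/m_{\log}\to1/\delta$, choose $\delta c>c'$) to recover the full range $c'\in(0,1)$. All of these steps are sound — your Neumann estimate, the asymptotic comparison of $\tilde m_{\log}$ with $m_{\log}$, and the absorption of the $1/n$ term are each verified correctly — but the detour through the extension region and the $\tilde m$-versus-$m$ reconciliation is avoidable here: since $F_x$ is the restriction of a globally defined holomorphic function whose derivatives are explicit in terms of powers of the resolvent, the direct derivative estimate is both shorter and gives the conclusion for every $c\in(0,1)$ without any limiting argument. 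Your Cauchy-estimate route would be the natural one if $G_x$ were only known to extend a little past the circle (as in the Laplace-transform analogues); in the Katznelson--Tzafriri setting, where $\sigma(T)\cap\T=\{1\}$ hands you $R(\lambda,T)$ on all of $\T\setminus\{1\}$ explicitly, it is extra machinery.
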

 
\begin{proof}
The result follows from part (b) of Theorem~\ref{Ing} applied, with $X$ replaced by $\B(X)$, to the sequence $x$ whose $n$-th term is $x_n=T^n(I-T)$, $n\ge0$. Indeed, the sequence $x$ is bounded since $T$ is power-bounded, and moreover 
$\smash{\sum_{k=0}^nx_n=I-T^{n+1}}$ for all $n\ge0,$ so \eqref{bdd} is also satisfied, again by power-boundedness of $T$. Furthermore, 
$G_x(\lambda)=(I-T)R(\lambda,T)$ for  $|\lambda|>1.$  Let $G_x$ denote also the extension of this map to the resolvent set $\rho(T)=\CC\backslash\sigma(T)$ and let $F_x$ be the restriction of $G_x$ to $\T\backslash\{1\}$. Note that  $\|R(\lambda,T)\|\ge \dist(\lambda,\sigma(T))^{-1}\ge |1-\lambda|^{-1}$  for all $\lambda\in\rho(T)$, and hence 
 \begin{equation}\label{lb}
 m(\varepsilon)\ge \frac{1}{|1-\e^{\ii\varepsilon}|}=\frac{1}{2\sin(\varepsilon/2)}\ge\frac{1}{\varepsilon},\quad 0<\varepsilon\le\pi.
 \end{equation}
 Further, for $k\ge0$,  
$$F_x^{(k)}(\lambda)=(-1)^k k! R(\lambda,T)^k\big(I+(1-\lambda)R(\lambda,T)\big),\quad \lambda\in\T\backslash\{1\},$$
and it follows from \eqref{lb} that \eqref{dom_fun} holds. Since $\mlog(\varepsilon)\gtrsim m(\varepsilon)\ge\varepsilon^{-1}$
 for all $\varepsilon\in(0,\pi]$, and therefore $n^{-1}\lesssim \mlog^{-1}(cn)$ for all sufficiently large $n\ge1$, \eqref{KT} follows from \eqref{bound}.
\end{proof}

\begin{rem}\label{KT_rem} 
\begin{enumerate}[(a)]
\item\label{opt_rem}  For functions $m:(0,\pi]\to[1,\infty)$ of the form $m(\varepsilon)=C\varepsilon^{-\alpha}$ for suitable constants $C>0$, as considered in the first part of Example~\ref{poly_ex}, the right-hand side in \eqref{KT} is given by that in \eqref{u_bd}.
It is shown in \cite[Section~3]{Se2} that the logarithmic factor in this expression can be dropped if $X$ is a Hilbert space but not for general Banach spaces. 
\item  It is possible to obtain a `local' version of Theorem~\ref{KT_quant} from Theorem~\ref{Ing} giving, for a fixed $x\in X$, an estimate for the rate of decay of $\|T^n(I-T)x\|$ as $n\to\infty$ which depends on the behaviour of the `local' resolvent operator $R(\lambda,T)x$ as $|\lambda|\to1+$; see for instance \cite{BNR98}, \cite{BV90}, \cite{BY00}, \cite{Ch98} and \cite{To01} for related local results in the context mainly of $C_0$-semigroups. Similarly, Theorem~\ref{Ing} can be used to obtain an estimate on the rate of decay of weak orbits $\phi(T^n(I-T)x)$ as  $n\to\infty$, where $x\in X$ and $\phi$ is a bounded linear functional on $X$.
\end{enumerate}
\end{rem}

\bibliography{/Users/David/Documents/TeX/Bibliographies/Seifert}
\bibliographystyle{plain}
\end{document}